\numberwithin{equation}{section}
\theoremstyle{plain}
\newtheorem{theorem}{Theorem}[section]
\newtheorem{lemma}[theorem]{Lemma}
\theoremstyle{plain}
\numberwithin{equation}{section}
\theoremstyle{remark}
\newtheorem{remark}[theorem]{Remark}
\DeclareMathOperator{\dist}{dist}
\DeclareMathOperator{\vol}{vol}
\begin{document}

\date{}
\title
[Lattice Points]{Lattice points in model domains of finite type
in $\mathbb{R}^d$, II}
\author[J. Guo and T. Jiang]{Jingwei Guo \  \ Tao Jiang}

\address{Jingwei Guo\\Department of Mathematics\\
University of Science and Technology of China\\
Hefei, Anhui Province 230026, People's Republic of China}

\email{jwguo@ustc.edu.cn}

\address{Tao Jiang\\Department of Mathematics\\
University of Science and Technology of China\\
Hefei, Anhui Province 230026, People's Republic of China}

\email{jt1023@mail.ustc.edu.cn}

\thanks{J.G. is partially supported by the NSFC Grant No. 11571131 and No. 11501535.}

\begin{abstract}
We study the lattice point problem associated with a special class of
high-dimensional finite type domains via estimating the Fourier transforms
of corresponding indicator functions.
\end{abstract}

\subjclass[2010]{Primary 11P21, 11H06, 52C07}

\keywords{Lattice points, convex domains, finite type.}

\maketitle

\section{Introduction}\label{introduction}

In this paper we study the lattice point problem associated with the following domain in  $\mathbb{R}^d$ ($d\geq 3$)
\begin{equation}
\mathcal{D}=\left\{x\in\mathbb{R}^{d}  :  \sum_{p=0}^{n-1}\left(\sum_{l=1+d_{p}}^{d_{p+1}} x_{l}^{\omega_{l}}
\right)^{m_{p+1}}\le 1\right\},\label{modeldomain}
\end{equation}
where $\omega_l\in 2\mathbb{N}$ with $1\leq l\leq d$, and $n$, $m_{p+1}, d_{p+1}\in \mathbb{N}$ with $0\leq p\leq n-1$ and $0=d_0< d_1< \ldots <d_{n-1}<d_n=d$.

Given any compact convex domain $\mathcal{B}\subset \mathbb{R}^d$ the associated lattice point problem is about counting the number of lattice points $\mathbb{Z}^d$
in the enlarged domain $t\mathcal{B}$ and the main problem is to study the remainder
$R_{\mathcal{B}}(t):=\#(t\mathcal{B}\cap\mathbb{Z}^d)-\vol(\mathcal{B})t^d$ for $t\geq 1$.

If the boundary $\partial \mathcal{B}$ has points of vanishing
Gaussian curvature, the problem is relatively not well understood.
The solution in high dimensions is still far from complete though a few partial results
are known.
For a better understanding we start with the study of some typical model domains of finite type (in the sense of Bruna, Nagel, and Wainger~\cite{BNW}) in $\mathbb{R}^d$ including
those appearing in \cite{model1} (see \eqref{model1} below) and more generally the domain $\mathcal{D}$ defined by \eqref{modeldomain}.

Our study of such domains is motivated by some examples in literature. To mention a few, super spheres
\begin{equation*}
\mathcal{B}=\{x\in \mathbb{R}^d :
|x_1|^{\omega}+|x_2|^{\omega}+\cdots+|x_d|^{\omega}\leq  1\}
\end{equation*}
are considered in Randol~\cite{randol} for even $\omega\geq 3$ and in Kr\"atzel~\cite{kratzel_odd} for odd $\omega\geq 3$, and it is proved that
\begin{equation}
R_{\mathcal{B}}(t)=O\left(t^{(d-1)(1-1/\omega)}+t^{d-2+2/(d+1)}\right)\label{conj-bound}
\end{equation}
and this estimate is the best possible when $\omega\geq d+1$. For further results of super spheres (ellipsoids) see \cite{kratzel, survey2004} and the references contained therein. Kr\"atzel~\cite{kratzel_2002} and Kr{\"a}tzel and
Nowak~\cite{K-N-2008,K-N-2011} study a special class of convex
domains in $\mathbb{R}^3$,
\begin{equation}
\mathcal{B}=\left\{x\in \mathbb{R}^3 :
|x_1|^{mk}+\left(|x_2|^k+|x_3|^k\right)^m\leq 1\right\}\label{KN-EX}
\end{equation}
with certain assumptions on reals $k$ and $m$ (for example, in \cite{K-N-2011}, $k>2$,
$m>1$, and $mk\geq 7/3$). The contribution of flat points is evaluated precisely and that of
other boundary points is estimated.

Motivated by these works the first author studied in \cite{model1} the domain
\begin{equation}
\mathcal{B}=\{x\in \mathbb{R}^d :
x_1^{\omega_1}+\cdots+x_d^{\omega_d}\leq 1\} \label{model1}
\end{equation}
for $\omega_l\in 2\mathbb{N}$ with $1\leq l\leq d$. A precise upper bound of $R_{\mathcal{B}}(t)$ is given, which leads to the same bound \eqref{conj-bound}.

In this paper we make a small progress by studying more general domain $\mathcal{D}$ in $\mathbb{R}^d$. For any $1\leq i\leq d$ there exists a unique $0\leq p(i)\leq n-1$ such that $1+d_{p(i)}\leq i\leq d_{p(i)+1}$. For any $1\leq j, l\leq d$, denote
\begin{equation}
m_{j,l}=\bigg\{ \begin{array}{ll}
1 & \textrm{if $p(j)=p(l)$},\\
m_{p(l)+1} & \textrm{if $p(j)\ne p(l)$}.
\end{array}\label{def-mjl}
\end{equation}
We then have
\begin{theorem}\label{latticethm}
For the domain $\mathcal{D}$ defined by \eqref{modeldomain}, we have
\begin{align}
\begin{split}
&R_{\mathcal{D}}(t) =\sum_{j=1}^{d}O,\Omega \left(t^{d-1-\sum\limits_{1\leq l\leq d, l\neq j}\frac{1}{m_{j,l}\omega_l}}\right)\\
&+\sum_{j=1}^{d}\sum_{i=2}^{d}\sum_{S\in P_{i}(\mathbb{N}_{d}), S\ni j} O\left(t^{d-1-\frac{i-1}{d+1}-\frac{2d}{d+1}\sum\limits_{1\leq l\leq d, l\notin S}\frac{1}{m_{j,l}\omega_l}}\right),\label{theorem1}
\end{split}
\end{align}
where $\mathbb{N}_{d}=\{1,2,\ldots,d\}$ and $P_{i}(\mathbb{N}_{d})$ is the collection of all subsets of $\mathbb{N}_{d}$ having $i$ elements. If $\omega=\max_{1\le j,l\le d}\{m_{j,l}\omega_l\}$, then
 \begin{equation}
|R_{\mathcal{D}}(t)|\lesssim t^{(d-1)(1-1/\omega)}+t^{d-2+2/(d+1)}.\label{theorem11}
\end{equation}
\end{theorem}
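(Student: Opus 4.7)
The plan is to extend the Fourier-analytic framework of \cite{model1} to the block-structured domain $\mathcal{D}$. A mollification of $\chi_{t\mathcal{D}}$ at scale $\eta > 0$ followed by Poisson summation reduces $R_\mathcal{D}(t)$ to the main term $t^d\vol(\mathcal{D})$ and a Fourier-tail sum
\begin{equation*}
t^d\sum_{k\in\mathbb{Z}^d\setminus\{0\}}\hat{\chi}_\mathcal{D}(tk)\,\hat\rho(\eta k),
\end{equation*}
plus a mollification error which is absorbed by optimizing $\eta$ at the end. The $\Omega$-claims for the first sum in \eqref{theorem1} would be handled separately by a Jarn\'{\i}k-type construction: at a flat boundary point whose outer normal is rationally aligned, a single frequency $k$ in the Poisson expansion already forces oscillation of the stated order.

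The technical core is a decay estimate for $\hat{\chi}_\mathcal{D}(\xi)$ at large $\xi$. By the divergence theorem, $\hat{\chi}_\mathcal{D}$ becomes an oscillatory integral over $\partial\mathcal{D}$; a partition of unity localizes to a patch where the boundary is a graph $x_j=\phi(\tilde{x})$ with $\tilde{x}=(x_l)_{l\neq j}$. The defining identity of $\mathcal{D}$ forces $\phi$ to vanish along the $x_l$-direction to order exactly $m_{j,l}\omega_l$: when $l$ lies in the same block as $j$, only the inner weight $\omega_l$ enters, while when $l$ lies in a different block, the outer power $m_{p(l)+1}$ compounds with $\omega_l$, reproducing the weight of \eqref{def-mjl}. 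Iterating one-dimensional van der Corput bounds coordinate-by-coordinate in the regime where $|\xi_j|$ dominates then gives
\begin{equation*}
|\hat{\chi}_\mathcal{D}(\xi)|\lesssim|\xi_j|^{-1}\prod_{l\neq j}|\xi_j|^{-1/(m_{j,l}\omega_l)},
\end{equation*}
and Poisson summation over dyadic shells in $k$ where $|k_j|$ dominates yields the $j$-th term of the first sum in \eqref{theorem1}. A second family of estimates is obtained by deploying a convex-body curvature bound on a chosen subset $S\ni j$ of $i$ ``elliptic'' directions and van der Corput on the remaining $d-i$ flat directions $l\notin S$; Hlawka-type interpolation then introduces the characteristic exponents $(i-1)/(d+1)$ and $2d/(d+1)$, with the sum over $S\in P_i(\mathbb{N}_d)$ reproducing the second family of $O$-terms in \eqref{theorem1}.

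Finally, \eqref{theorem11} is deduced from \eqref{theorem1} by elementary extremization: since $m_{j,l}\omega_l\leq\omega$ implies $1/(m_{j,l}\omega_l)\geq 1/\omega$, each term of the first sum is bounded by $t^{(d-1)(1-1/\omega)}$; a short case analysis shows that the second sum is maximized at $i=d$, $S=\mathbb{N}_d$, giving exactly $d-1-(d-1)/(d+1)=d-2+2/(d+1)$, while for smaller $i$ the extra factor $(2d/(d+1))(d-i)/\omega$ forces the exponent below either $d-2+2/(d+1)$ or $(d-1)(1-1/\omega)$. The principal obstacle is the uniform Fourier estimate: the block structure of \eqref{modeldomain} causes the local graph $\phi$ to degenerate in distinct ways depending on which block each flat direction belongs to, and pinning down the sharp contact order $m_{j,l}\omega_l$ globally requires a careful block-wise induction on the depth of vanishing, together with a stable parametrization of those boundary patches near which several blocks simultaneously touch zero. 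This block-wise analysis is the main new ingredient beyond the treatment of \eqref{model1} in \cite{model1}.
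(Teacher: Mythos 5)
Your high-level framework — mollification, Poisson summation, reduction to a Fourier tail sum, and a separate argument for the $\Omega$-bound — coincides with the paper's proof. But the technical core, which you correctly identify as the uniform decay estimate for $\widehat{\chi}_{\mathcal D}$, is not established, and the route you sketch for it is unlikely to go through as stated.

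The bound you write down, $|\widehat\chi_{\mathcal D}(\xi)|\lesssim |\xi_j|^{-1}\prod_{l\ne j}|\xi_j|^{-1/(m_{j,l}\omega_l)}$, is only the ``flat'' half of what the argument needs. To sum $\widehat\chi_{\mathcal D}(tk)\widehat\rho(\epsilon k)$ over $k$ with $i\geq 2$ nonzero components, one must have a complementary curvature-type bound in each direction $l$ that scales as $t^{-1/2}|\xi_l|^{-(m_{j,l}\omega_l-2)/(2(m_{j,l}\omega_l-1))}$, and then the actual estimate is the \emph{minimum} of the two options, applied direction-by-direction; without this the sum over the $i-1$ off-axis nonzero coordinates either diverges or forces a lossy choice of $\epsilon$. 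The paper encapsulates exactly this in Theorem~\ref{theorem2}, which is derived not from iterated van der Corput but from a surface-measure estimate for Bruna--Nagel--Wainger caps $\tilde B(x(\xi),|\xi|^{-1})$ (Lemma~\ref{lemma1}) combined with \cite[Theorem B]{BNW}. Your proposal to ``iterate one-dimensional van der Corput coordinate-by-coordinate'' is precisely the delicate point: when several blocks degenerate simultaneously the graph function has variable-coefficient contact orders in the remaining coordinates, so uniformity of the van der Corput constants is genuinely in question, and you defer this with ``a careful block-wise induction'' without supplying it. This is the missing ingredient, not a routine extension.

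Two smaller inaccuracies: the exponents $(i-1)/(d+1)$ and $2d/(d+1)$ in \eqref{theorem1} do not arise from Hlawka-type interpolation of Fourier bounds; they come from optimizing the mollification parameter ($\epsilon=t^{-(d-1)/(d+1)}$) after comparing the lattice sum to a polar-coordinate integral. And the $\Omega$-claim is obtained in the paper not by a Jarn\'{\i}k-type lattice construction but by isolating the single-axis sum $\sum_{k=(k_1,0,\dots,0)}\widehat\chi_{\mathcal D}(tk)$ and invoking Schulz's asymptotic expansion for the Fourier transform to exhibit a non-vanishing periodic oscillation of the stated size; your one-sentence sketch is in the right spirit but does not substitute for that calculation. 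In short, the skeleton of your argument matches the paper's, but the load-bearing lemma (the two-sided, block-sensitive Fourier decay bound) is asserted rather than proved, and proving it is exactly where the paper's cap-measure machinery does its work.
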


\begin{remark}
By taking all $m_{p+1}$'s being $1$ we recover the result in \cite{model1}. If we take that $d=3$, $n=2$, $m_1=m_2=m$, $\omega_1=\omega_2=\omega_3=k$, and $d_1=1$, the domain $\mathcal{D}$ is in the special form of \eqref{KN-EX}. Since we only consider domains with smooth boundary we did not allow exponents $m$ and $k$ to be real numbers, to the contrary Kr{\"a}tzel and
Nowak~\cite{K-N-2008,K-N-2011} do allow such general exponents. We mainly use harmonic analysis tools, while Kr{\"a}tzel and Nowak apply a ``cut-into-slices''-method to
reduce a three-dimensional problem to a two-dimensional one and then work carefully on the latter problem.
\end{remark}

\begin{remark}
Here we have the same phenomenon as in \cite{model1} (see Remark 1 in \cite{model1}): in \eqref{theorem1}, the first sum is the
contribution of boundary points which lie on coordinate axes; the terms for $i=d$ is
$O(t^{d-2+2/(d+1)})$, due to boundary points that are not on any
coordinate plane; all other terms for $2\leq i \leq d-1$ come from boundary points lying on
coordinate planes but not on axes.
\end{remark}

\begin{remark}
Many authors made efforts to study general domains (instead of special examples) in $\mathbb{R}^3$ under different curvature assumptions. Partial results are obtained by Kr\"atzel, Popov, Peter, Nowak, etc. We refer interested readers to two excellent survey articles \cite{survey2004, nowak14survey} and the references given there. For domains in high dimensions, satisfactory answers still wait to be found.
\end{remark}

\begin{remark}
For convex domains of finite type in $\mathbb{R}^d$ Iosevich, Sawyer, and
Seeger~\cite[Theorem 1.3]{I-S-S-0} provides an estimate of the remainder. Their results work for high dimensions and the curvature assumption looks quite neat. Unfortunately, even for some model domains, their bound may not be sharp. For example Randol's bound \eqref{conj-bound} (namely, \eqref{theorem11}) for
super spheres is better when $\omega$ is not too large (say, of size
$<2d^2+O(d)$).
\end{remark}

{\it Notations:} We set $\mathbb{Z}_{*}^{d}=\mathbb{Z}^{d}\setminus
\{0\}$, and $\mathbb{R}^d_*=\mathbb{R}^d\setminus \{0\}$. The
Fourier transform of any function $f\in L^1(\mathbb{R}^d)$ is
$\widehat{f}(\xi)=\int f(x) \exp(-2\pi i x\cdot \xi) \, dx$. For
functions $f$ and $g$ with $g$ taking nonnegative real values,
$f\lesssim g$ means $|f|\leq Cg$ for some constant $C$. If $f$ is
nonnegative, $f\gtrsim g$ means $g\lesssim f$. The Landau notation
$f=O(g)$ is equivalent to $f\lesssim g$. The notation $f\asymp g$
means that $f\lesssim g$ and $g\lesssim f$. For lower bounds,
$f(t)=\Omega_{+}(g(t))$ means that $\limsup(f(t)/g(t))>0$ as
$t\rightarrow \infty$, $f(t)=\Omega_{-}(g(t))$ stands for
$-f(t)=\Omega_{+}(g(t))$, and $f(t)=\Omega(g(t))$ means that at
least one of previous two assertions is  true.

\section{The Fourier transform of the indicator function $\chi_{\mathcal{D}}$}

Let $\mathcal{D}$ be defined by \eqref{modeldomain}. If $x\in
\partial \mathcal{D}$ let $T_x$ be the affine tangent plane to
$\partial \mathcal{D}$ at $x$. Bruna, Nagel, and Wainger~\cite{BNW}
define a ``ball''
\begin{equation*}
\tilde{B}(x, \delta)=\{y\in \partial \mathcal{D}: \dist(y,
T_x)<\delta\}
\end{equation*}
to be a cap near $x$ cut off from $\partial \mathcal{D}$ by a plane
parallel to $T_x$ at distance $\delta$ from it. For nonzero $\xi\in \mathbb{R}^d$ let $x(\xi)$ be the unique point
on $\partial \mathcal{D}$ where the unit exterior normal is
$\xi/|\xi|$.

We first prove a generalization of \cite[Lemma 2.2]{model1} concerning the size of the surface measure of $\tilde{B}(x(\xi),|\xi |^{-1})$.

\begin{lemma}\label{lemma1}
Let $0<\varepsilon_0\leq 1$ be a constant and $1\leq j\leq d$ an integer. For any nonzero $\xi\in \mathbb{R}^d$ with $|\xi_{j}|/|\xi|\ge\varepsilon_0$, we have
\begin{equation*}
\sigma \left(\tilde{B}(x(\xi),|\xi |^{-1})\right)\lesssim \prod_{
\substack{l=1 \\ l\ne{j}}}^{d} \min\left\{|\xi|^{-\frac{1}{m_{j,l}\omega_l}},|\xi|^{-\frac{1}{2}}\left(|\xi_l|/|\xi|\right)^{-\frac{m_{j,l}\omega_l-2}{2(m_{j,l}\omega_l-1)}}\right\},
\end{equation*}
where $m_{j,l}$ is defined by \eqref{def-mjl} and the implicit constant only depends on $\varepsilon_0$ and $\mathcal{D}$.
\end{lemma}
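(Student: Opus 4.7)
The plan is to generalize the argument of \cite[Lemma~2.2]{model1} to accommodate the nested block structure of the defining function
\begin{equation*}
F(x):=\sum_{p=0}^{n-1}\Bigl(\sum_{l=1+d_p}^{d_{p+1}}x_l^{\omega_l}\Bigr)^{m_{p+1}}.
\end{equation*}
Since every $\omega_l$ is even, $F$ is invariant under coordinate sign changes, so after reflection we may assume $\xi_l\ge 0$ for all $l$, and consequently the coordinates of $x:=x(\xi)\in\partial\mathcal{D}$ are non-negative. Writing $\nu=\xi/|\xi|$, the hypothesis $\nu_j\ge\varepsilon_0$ together with the colinearity of $\nabla F(x)$ and $\nu$ will force $x_j$ to be bounded below by a constant depending only on $\varepsilon_0$ and $\mathcal{D}$. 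By the implicit function theorem we then parametrize $\partial\mathcal{D}$ near $x$ as a graph $y_j=g(\hat y)$ with $\hat y=(y_1,\dots,\widehat{y_j},\dots,y_d)$, and rewrite the cap as
\begin{equation*}
\tilde B(x,\delta)=\bigl\{y\in\partial\mathcal{D}:\,|\nabla F(x)\cdot(y-x)|\le\delta\,|\nabla F(x)|\bigr\}.
\end{equation*}

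The second step pins down the sizes of the individual $x_l$'s from the block structure. A direct computation gives, for $l$ in block $p+1$,
\begin{equation*}
\partial_l F(x)=m_{p+1}\,\omega_l\,x_l^{\omega_l-1}\Bigl(\sum_{l'=1+d_p}^{d_{p+1}}x_{l'}^{\omega_{l'}}\Bigr)^{m_{p+1}-1},
\end{equation*}
so the relation $\nabla F(x)=\alpha\xi$ ($\alpha>0$) yields an explicit formula for $\nu_l/\nu_j$. When $p(j)=p(l)$ the inner block factor cancels between numerator and denominator, producing $x_l\asymp(\nu_l/\nu_j)^{1/(\omega_l-1)}$; when $p(j)\ne p(l)$ and only $x_l$ is non-negligible in its own block (the flat regime for block $p(l)+1$), one instead obtains $x_l\asymp(\nu_l/\nu_j)^{1/(m_{p(l)+1}\omega_l-1)}$. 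This dichotomy is exactly what the piecewise definition \eqref{def-mjl} of $m_{j,l}$ encodes, and identifies $m_{j,l}\omega_l$ as the effective order of contact of $\partial\mathcal{D}$ at $x$ in direction $l$.

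The third step is a second-order Taylor expansion of $g$ at $\hat x$ followed by a product of one-dimensional sublevel-set estimates. After dividing through by $\partial_j F(x)$, the defining inequality for $\tilde B(x,\delta)$ reduces at the level of its diagonal part to
\begin{equation*}
\sum_{l\ne j}A_l\,(y_l-x_l)^2\lesssim\delta,\qquad A_l\asymp x_l^{m_{j,l}\omega_l-2},
\end{equation*}
together with the pure monomial tail $|y_l|^{m_{j,l}\omega_l}$ that takes over once $|y_l-x_l|$ exceeds $x_l$. Applying in each coordinate the elementary estimate
\begin{equation*}
|\{s:\,A_l s^2\le\delta\text{ or }|s|^{m_{j,l}\omega_l}\le\delta\}|\lesssim\min\bigl(\delta^{1/2}A_l^{-1/2},\,\delta^{1/(m_{j,l}\omega_l)}\bigr)
\end{equation*}
and multiplying the contributions, then setting $\delta=|\xi|^{-1}$ and substituting $x_l\asymp(|\xi_l|/|\xi|)^{1/(m_{j,l}\omega_l-1)}$, yields exactly the two candidate factors in the statement.

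The main technical obstacle will be the block cross-terms. In \cite{model1} the Hessian of $F$ is diagonal, so the Hessian of $g$ is essentially diagonal and the product structure is immediate; here, differentiating $(\sum_{l'}x_{l'}^{\omega_{l'}})^{m_{p+1}}$ twice produces a rank-one off-diagonal contribution of size $x_l^{\omega_l-1}x_k^{\omega_k-1}$ within each block, which does not in general come from the $A_l$'s identified above. These mixed terms have to be either dominated by the diagonal entries after the normalization, or, in the regions where they are not, absorbed by a dyadic decomposition on the sizes of the block coordinates, at the end of which the same product estimate is recovered. Once this step is controlled, the remaining bookkeeping follows the pattern of \cite[Lemma~2.2]{model1}.
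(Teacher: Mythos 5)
Your proposal differs substantially from the paper's argument and, as written, has a real gap at exactly the point you flag.

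The paper does not parametrize $\partial\mathcal{D}$ as a graph and does not work with the Hessian of a graph function. It proceeds coordinate by coordinate: after eliminating $X_j$ via the tangent-plane equation and substituting into $F(a+X)=0$, it expands each term by Taylor's theorem and exploits that every exponent $\omega_l$ is even (hence every $t\mapsto t^{\omega_l}$ and every $u\mapsto u^{m_{p+1}}$ on $u\ge0$ is convex) to get \emph{all} remainder terms nonnegative. For each fixed $l$ it then discards every nonnegative term except the one involving $X_l$, obtaining an inequality of the form $(\text{nonnegative expression in }X_l)\lesssim|\xi|^{-1}$; that expression is carefully built so that it contains, for $l$ in the same block as $j$, both $a_l^{\omega_l-2}X_l^2$ and $X_l^{\omega_l}$, and for $l$ in a different block, both $S^{m-1}a_l^{\omega_l-2}X_l^2$ and $X_l^{m\omega_l}$ (separated via $(u+v)^m\ge u^m+v^m$). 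Since the surface measure of the cap is controlled by $\prod_{l\ne j}\max|X_l|$, this directly gives the product bound without ever confronting the full $(d-1)\times(d-1)$ quadratic form. The case $\xi_l=0$, the case $X_l<0$ with $|X_l|\le ca_l$, and the case $|X_l|>ca_l$ (handled by a compactness argument and \cite[Theorem A]{BNW}) are treated separately.

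Your approach would confront the cross-term problem head-on, and you correctly identify it as the crux, but you do not resolve it, and neither of your two proposed escapes is clearly workable as stated. Within a block, the Hessian of $S_p^{m_{p+1}}$ is $m(m-1)S_p^{m-2}\,vv^T+mS_p^{m-1}\,\mathrm{diag}(d_l)$ with $v_l=\omega_l x_l^{\omega_l-1}$, so the off-diagonal $lk$ entry is \emph{comparable} to the geometric mean of the $vv^T$ parts of the $l$ and $k$ diagonals, not dominated by them; nothing is gained from trying to absorb off-diagonals into diagonals. The argument that does work is to drop the (nonnegative) rank-one piece $m(m-1)S_p^{m-2}(v\cdot X)^2$ entirely and keep only $mS_p^{m-1}d_l X_l^2\ge0$ for the specific $l$ you care about — which is exactly the paper's move, and is not a Hessian-diagonalization argument. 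Two further problems in your sketch: (i) your claimed coefficient $A_l\asymp x_l^{m_{j,l}\omega_l-2}$ is in general only a lower bound for the true diagonal coefficient $\asymp S_{p(l)}^{m_{p(l)+1}-1}x_l^{\omega_l-2}$; you need the latter, together with the exact normal relation $S^{m-1}a_l^{\omega_l-1}\asymp\xi_l/|\xi|$, to land on the stated exponent. (ii) The identification $x_l\asymp(\nu_l/\nu_j)^{1/(m_{j,l}\omega_l-1)}$ is only an inequality $\lesssim$ when another coordinate in block $p(l)$ dominates; the paper uses it as a one-sided bound (its \eqref{fff}), while you write it as a two-sided estimate under a regime hypothesis you don't subsequently justify. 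Finally, you do not address the regime $|X_l|\gg a_l$ for $X_l<0$, which requires an auxiliary argument (the paper uses a compactness lemma and \cite[Theorem A]{BNW}) and cannot be folded into a quadratic estimate.
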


\begin{proof}
For an arbitrarily fixed nonzero $\xi\in \mathbb{R}^d$  with $|\xi_{j}|/|\xi|\ge\varepsilon_0$, denote $x(\xi)=(a_1, a_2, \ldots, a_{d}) \in \partial\mathcal{D}$. Due to the symmetry of $\partial \mathcal D$, we may assume that all $\xi_{l}$'s and $a_l$'s are nonnegative. We only treat the case $j=1$ while all other cases are similar.

Denote $\partial \mathcal{D}$ by the equation
\begin{equation}
F(x)=0 \label{bbb}
\end{equation}
with $F$ explicitly determined by \eqref{modeldomain}. Hence
\begin{equation}
\frac{\nabla F}{|\nabla F|}(x(\xi))=\frac{\xi}{|\xi|}, \label{proofeq1}
\end{equation}
where  $|\nabla F|\asymp 1$.

By definition the interested cap is the one near $x(\xi)$ cut off from $\partial \mathcal{D}$ by the plane
\begin{equation}
\sum_{l=1}^{d}\xi_l(x_l-a_l)+1=0.\label{prooftangent plane}
\end{equation}
After changing variables $X_l=x_l-a_l$, combining equations \eqref{bbb} and \eqref{prooftangent plane}, and eliminating $X_1$, we get
\begin{equation}
\begin{split}
&\left(\left(a_1-\xi_1^{-1}-\xi_1^{-1}\sum_{l=2}^{d}\xi_l X_l\right)^{\omega_1}+\sum_{l=2}^{d_1}(a_l +X_l)^{\omega_l}\right)^{m_1}\\
&\qquad +\sum_{p=1}^{n-1}\left(\sum_{l=1+d_{p}}^{d_{p+1}} (a_l +X_l)^{\omega_{l}}
\right)^{m_{p+1}}-1=0.
\end{split}\label{proofprojection}
\end{equation}

To estimate $\sigma\left(\tilde{B}\left(x(\xi), |\xi |^{-1}\right)\right)$ it suffices to show that if  $(X_2, \ldots, X_d)$ satisfies \eqref{proofprojection} then for each $2\leq l\leq d$
\begin{equation}
\max |X_l|\lesssim \min\left\{|\xi|^{-\frac{1}{m_{1,l}\omega_l}},|\xi|^{-\frac{1}{2}}(|\xi_l|/|\xi|)^{-\frac{m_{1,l}\omega_l-2}{2(m_{1,l}\omega_l-1)}}\right\}.\label{estimate1}
\end{equation}
To prove \eqref{estimate1} we discuss two cases: $2\le l\le d_1$ or $1+d_1\le l \le d$.

\emph{Case 1}: $2\le l\le d_{1}$. We may assume $l=2$ while other cases can be handled similarly.

\emph{Subcase 1.1}: $\xi_2/|\xi|=0$. Then \eqref{proofprojection} implies
\begin{equation}
|X_2|\lesssim |\xi|^{-1/\omega_2}.\label{aaa}
\end{equation}

Indeed, in this case $a_2=0$ by \eqref{proofeq1}. We apply Taylor's expansion of order two to $(a_1-\xi_1^{-1}-\xi_1^{-1}\sum_{l=2}^{d}\xi_l X_l)^{\omega_1}$ at $a_1$ and to $(a_l+X_l)^{\omega_l}$ at $a_l$ (for $3\leq l\leq d$) with nonnegative remainders (due to the evenness of $\omega_l$). For each $0\leq p\leq n-1$ we then apply Taylor's expansion of order two to the $m_{p+1}$th powers in \eqref{proofprojection} at $\sum_{l=1+d_{p}}^{d_{p+1}} a_{l}^{\omega_{l}}$. After using the condition $x(\xi)\in \partial\mathcal D$ to cancel the constant term, \eqref{proofeq1} to eliminate linear terms, and dropping nonnegative remainder terms, we get
\begin{equation}
X_2^{\omega_2}\leq \omega_1a_1^{\omega_1-1}\xi_1^{-1},\label{ccc}
\end{equation}
which implies  \eqref{aaa} since $\xi_1\asymp |\xi|$.

\emph{Subcase 1.2}: $\xi_2/|\xi|\ne0$. In this case $a_2\neq 0$. Besides all the expansions used in Subcase 1.1, we also need
\begin{equation*}
(a_2 +X_2)^{\omega_2}=a_2^{\omega_2}+\omega_2a_2^{\omega_2-1}X_2+a_2^{\omega_2-2}X_2^{2}\left(\omega_2(\omega_2-1)/2+\delta_1\right)+X_2^{\omega_2} \end{equation*}
by the binomial formula, where
\begin{equation*}
\delta_1=C_{\omega_2}^{3}X_2/a_2+C_{\omega_2}^{4}(X_2/a_2)^{2}+\ldots+C_{\omega_2}^{\omega_2-1}(X_2/a_2)^{\omega_2-3}.
\end{equation*}
Like what we did in Subcase 1.1, we get
\begin{equation}
a_2^{\omega_2-2}X_2^{2}\left(\omega_2(\omega_2-1)/2+\delta_1\right)+X_2^{\omega_2}\le\omega_1a_1^{\omega_1-1}\xi_1^{-1}\label{proofineq2}
\end{equation}
as a replacement of \eqref{ccc}. Note that $a_1\gtrsim 1$ since $|\xi_{1}|/|\xi|\ge\varepsilon_0$. Hence the second equation in \eqref{proofeq1} implies $a_2\asymp(\xi_2/|\xi|)^{1/(\omega_2-1)}$.

If $X_2>0$, then $\delta_1>0$. \eqref{proofineq2} immediately implies the desired bound for $\max_{X_2>0}|X_2|$, namely
\begin{equation*}
\max_{X_2>0}|X_2|\lesssim \min\left\{|\xi|^{-\frac{1}{\omega_2}},|\xi|^{-\frac{1}{2}}(|\xi_2|/|\xi|)^{-\frac{\omega_2-2}{2(\omega_2-1)}}\right\}.
\end{equation*}

If $X_2<0$ and $\max_{X_2<0}|X_2|\leq c_1 a_2$ for a sufficiently small constant $c_1$ (say, such that $\omega_2(\omega_2-1)/2+\delta_1>\omega_2(\omega_2-1)/4$), then \eqref{proofineq2} implies the desired bound for $\max_{X_2<0}|X_2|$.

If $X_2<0$ and $\max_{X_2<0}|X_2|> c_1 a_2$, by a compactness argument there exists a constant $C_1$ (depending only on $c_1$ and $\mathcal{D}$) such that $\tilde{B}(x(\xi), C_1|\xi|^{-1})$ intersects the plane $x_2=-a_2$. It suffices to estimate the size of this larger cap $\tilde{B}(x(\xi), C_1|\xi|^{-1})$. Then we need to study \eqref{proofprojection} with $\xi$ replaced by $\xi/C_1$ and to estimate $\max_{X_2<0}|X_2|$ subject to $\max_{X_2<0}|X_2|> 2 a_2$. Like \eqref{proofineq2} we get
\begin{equation*}
a_2^{\omega_2-2}X_2^{2}\left(\omega_2(\omega_2-1)/2+\delta_1\right)+X_2^{\omega_2}\lesssim |\xi|^{-1}.
\end{equation*}
We also note that if $-X_2>2a_2$ then
\begin{align*}
&a_2^{\omega_2-2}X_2^{2}\left(\omega_2(\omega_2-1)/2+\delta_1\right)+X_2^{\omega_2}\\
&=(a_2 +X_2)^{\omega_2}-a_2^{\omega_2}-\omega_2a_2^{\omega_2-1}X_2\geq  (a_2 +X_2)^{\omega_2}\geq X_2^{\omega_2}/2^{\omega_2}.
\end{align*}
Combining these two inequalities above yields
\begin{equation}
\max_{X_2<0}|X_2|\lesssim |\xi|^{-1/\omega_2}.\label{ddd}
\end{equation}
Hence $a_2\lesssim |\xi|^{-1/\omega_2}$, which implies
\begin{equation}
|\xi|^{-\frac{1}{\omega_2}}\lesssim
|\xi|^{-\frac{1}{2}}\left(|\xi_2|/|\xi|\right)^{-\frac{\omega_2-2}{2(\omega_2-1)}}.\label{eee}
\end{equation}
By \eqref{ddd} and \eqref{eee} we get again the desired bound for $\max_{X_2<0}|X_2|$. This finishes Subcase 1.2, hence Case 1 as well.

\emph{Case 2}: $1+d_1\le l\le d$. We may assume $l=d$ while other cases can be handled similarly.

\emph{Subcase 2.1}: $\xi_{d}/|\xi|=0$. In this case $a_d=0$ by \eqref{proofeq1}. This case is the same as Subcase 1.1 except that we need to treat $X_d$ (instead of $X_2$) separately. More precisely we apply
\begin{equation*}
\left(\sum_{l=1+d_{n-1}}^{d} (a_l +X_l)^{\omega_{l}}
\right)^{m_{n}}\geq \left(\sum_{l=1+d_{n-1}}^{d-1}(a_l+X_l)^{\omega_l}\right)^{m_n}+X_d^{m_n\omega_d}
\end{equation*}
and then like \eqref{ccc} we get
\begin{equation*}
X_{d}^{m_n\omega_{d}}\leq m_1\omega_1\left(\sum_{l=1}^{d_1} a_l^{\omega_{l}}
\right)^{m_1-1}a_1^{\omega_1-1}\xi_1^{-1},
\end{equation*}
which implies
\begin{equation*}
\max|X_{d}|\lesssim|\xi|^{-1/(m_n\omega_{d})}.
\end{equation*}

\emph{Subcase 2.2}: $\xi_{d}/|\xi|\ne0$. Note that the last equation of \eqref{proofeq1} implies
\begin{equation}
\left(\sum_{l=1+d_{n-1}}^{d}a_{l}^{\omega_{l}}\right)^{m_n-1}a_{d}^{\omega_{d}-1}\asymp \xi_d/|\xi|,\label{ggg}
\end{equation}
hence
\begin{equation}
a_{d}\lesssim(\xi_{d}/{|\xi|})^{1/(m_n\omega_{d}-1)}.\label{fff}
\end{equation}

If $X_{d}>0$, we apply the binomial formula to $(a_d+X_d)^{\omega_d}$ and use
\begin{align*}
&\left(\sum_{l=1+d_{n-1}}^{d}(a_l+X_l)^{\omega_l}\right)^{m_n}\ge X_d^{m_n\omega_d}+\\
&\ \ \left(\sum_{l=1+d_{n-1}}^{d-1}(a_l+X_l)^{\omega_l}+a_d^{\omega_d}+\omega_da_d^{\omega_d-1}X_d+\frac{\omega_d(\omega_d-1)}{2}a_d^{\omega_d-2}X_d^2\right)^{m_n}
\end{align*}
to get a separated term $X_d^{m_n\omega_d}$. Like Subcase 1.1 we get
\begin{align*}
&\frac{m_n\omega_{d}(\omega_{d}-1)}{2}\left(\sum_{l=1+d_{n-1}}^{d}a_l^{\omega_l}\right)^{m_n-1}a_d^{\omega_d-2}X_d^2+X_d^{m_n\omega_d}\\
&\quad \leq m_1\omega_1\left(\sum_{l=1}^{d_1}a_l^{\omega_l}\right)^{m_1-1}a_1^{\omega_1-1}\xi_1^{-1}\lesssim |\xi|^{-1}.
\end{align*}
The inequality above, combining with \eqref{ggg} and \eqref{fff}, yields the desired bound for $\max_{X_d>0}|X_d|$, namely
\begin{equation*}
\max_{X_d>0}|X_d|\lesssim\min\left\{|\xi|^{-\frac{1}{m_n\omega_d}},|\xi|^{-\frac{1}{2}}(\xi_d/|\xi|)^{-\frac{m_n\omega_d-2}{2(m_n\omega_d-1)}}\right\}.
\end{equation*}

If $X_d<0$, we do not need to separate an $X_d^{m_n\omega_d}$ term. We mimic the computation to derive \eqref{proofineq2} in Subcase 1.2 and get
\begin{equation}
\begin{split}
&m_n\left(\sum_{l=1+d_{n-1}}^{d}a_{l}^{\omega_{l}}\right)^{m_n-1}\left(a_{d}^{\omega_{d}-2}X_{d}^{2}(\omega_{d}(\omega_{d}-1)/2+\delta_2)
+X_{d}^{\omega_{d}}\right)\\
&\quad \le m_1\omega_1\left(\sum_{l=1}^{d_1}a_l^{\omega_l}\right)^{m_1-1}a_1^{\omega_1-1}\xi_1^{-1},
\end{split}\label{proofineq5}
\end{equation}
where
\begin{equation*}
\delta_2=C_{\omega_d}^{3}X_d/a_d+C_{\omega_d}^{4}(X_d/a_d)^{2}+\ldots+C_{\omega_d}^{\omega_d-1}(X_d/a_d)^{\omega_3-3}.
\end{equation*}

If $\max_{X_{d}<0}|X_{d}|\le c_2a_{d}$ for a sufficiently small constant $c_2$, then \eqref{proofineq5} (with \eqref{ggg} and \eqref{fff}) implies
\begin{equation}
\max_{X_{d}<0}|X_{d}|\lesssim |\xi|^{-\frac{1}{2}}({\xi_{d}}/{|\xi|})^{-\frac{m_n\omega_{d}-2}{2(m_n\omega_{d}-1)}}\label{proofineq7}
\end{equation}
and
\begin{equation}
a_{d}^{\omega_d (m_n-1)}X_{d}^{\omega_{d}}\lesssim |\xi|^{-1}. \label{proofineq7.1}
\end{equation}

Since $\max_{X_{d}<0}|X_{d}|\le c_2a_{d}$, \eqref{proofineq7.1} implies
\begin{equation}
\max_{X_{d}<0}|X_{d}|\lesssim|\xi|^{-1/(m_n\omega_d)}. \label{proofineq7.3}
\end{equation}
The \eqref{proofineq7} and \eqref{proofineq7.3} give the desired bound for $\max_{X_{d}<0}|X_{d}|$ when $\max_{X_{d}<0}|X_{d}|\le c_2a_{d}$.

If $\max_{X_{d}<0}|X_{d}|> c_2a_{d}$, by a compactness argument there is a constant $C_2\ge1$ (depending only on $c_2$ and $\mathcal{D}$) such that $\tilde{B}(x(\xi),C_2|\xi |^{-1})$ intersects the plane $x_{d}=-a_{d}$. It suffices to estimate the size of the cap $\tilde{B}(x(\xi), C_2|\xi|^{-1})$. Then we need to study \eqref{proofprojection} with $\xi$ replaced by $\xi/C_2$ and to estimate $\max_{X_d<0}|X_d|$ subject to $\max_{X_d<0}|X_d|> 2 a_d$. Like \eqref{proofineq5}, we get
\begin{equation*}
m_n\left(\sum_{l=1+d_{n-1}}^{d}a_{l}^{\omega_{l}}\right)^{m_n-1}\left(a_{d}^{\omega_{d}-2}X_{d}^{2}(\omega_{d}(\omega_{d}-1)/2+\delta_2)
+X_{d}^{\omega_{d}}\right) \lesssim |\xi|^{-1}.
\end{equation*}
We also note that if $-X_d>2a_d$ then
\begin{equation*}
a_{d}^{\omega_{d}-2}X_{d}^{2}(\omega_{d}(\omega_{d}-1)/2+\delta_2)+X_{d}^{\omega_{d}}\geq X_d^{\omega_d}/2^{\omega_d}.
\end{equation*}
Combining these two inequalities above yields
\begin{equation}
\left(\sum_{l=1+d_{n-1}}^{d}a_{l}^{\omega_{l}}\right)^{m_n-1}\left(\max_{X_d<0}|X_d|\right)^{\omega_{d}}\lesssim |\xi|^{-1}.\label{proofineq9}
\end{equation}
It then follows from \eqref{proofineq9} and  $\max_{X_{d}<0}|X_{d}|> c_2a_{d}$ that
\begin{equation*}
\left(\sum_{l=1+d_{n-1}}^{d}a_{l}^{\omega_{l}}\right)^{m_n-1}a_{d}^{\omega_{d}-2}\left(\max_{X_d<0}|X_d|\right)^{2}\lesssim |\xi|^{-1},
\end{equation*}
which (with \eqref{ggg} and \eqref{fff})  implies \eqref{proofineq7}.

It remains to prove \eqref{proofineq7.3}. Since the cap $\tilde{B}(x(\xi),C_2|\xi |^{-1})$ intersects the coordinate plane $x_{d}=0$, we can take a point $P$ from the intersection. By \cite[Theorem A]{BNW} there exists a constant $C_3$ (depending only on $\mathcal{D}$) such that $\tilde{B}(x(\xi),C_2|\xi |^{-1})\subset \tilde{B}(P, C_3C_2|\xi |^{-1})$. Applying to $\tilde{B}(P, C_3C_2|\xi |^{-1})$ the result of Subcase 2.1 yields \eqref{proofineq7.3}. This finishes the estimate of $\max_{X_{d}<0}|X_{d}|$ when $\max_{X_{d}<0}|X_{d}|>c_2a_{d}$ and the proof of Subcase 2.2, hence the entire proof of the lemma.
\end{proof}

It follows
easily from the Gauss-Green formula, \cite[Theorem B]{BNW}, and Lemma \ref{lemma1} to get the following generalization of \cite[II, Theorem 2]{randol} and \cite[Theorem 2.1]{model1}.

\begin{theorem}\label{theorem2}
Let $0<\varepsilon_0\leq 1$ be a constant and $1\leq j\leq d$ an integer. For any $\xi\in S^{d-1}$ with $|\xi_{j}|\ge\varepsilon_0$ and $t>0$ we have
\begin{equation*}
|\widehat{\chi}_{\mathcal D}(t\xi)|\lesssim t^{-1}\prod_{
\substack{l=1 \\ l\ne{j}}}^{d} \min\left\{t^{-\frac{1}{m_{j,l}\omega_l}},t^{-\frac{1}{2}}|\xi_l|^{-\frac{m_{j,l}\omega_l-2}{2(m_{j,l}\omega_l-1)}}\right\},
\end{equation*}
where $m_{j,l}$ is defined by \eqref{def-mjl} and the implicit constant only depends on $\varepsilon_0$ and $\mathcal{D}$.
\end{theorem}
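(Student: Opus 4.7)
The plan is to combine three ingredients: the divergence theorem to convert $\widehat{\chi}_{\mathcal D}(t\xi)$ into an oscillatory surface integral, the decay estimate [BNW, Theorem~B] for such integrals on convex finite-type hypersurfaces, and Lemma~\ref{lemma1}. First, I would apply the Gauss-Green formula using a smooth vector field $F(x)=\frac{\xi}{-2\pi i t}e^{-2\pi i t\xi\cdot x}$, whose divergence equals $e^{-2\pi i t\xi\cdot x}$ (since $|\xi|=1$), to obtain
\begin{equation*}
\widehat{\chi}_{\mathcal D}(t\xi)=\frac{-1}{2\pi i t}\int_{\partial\mathcal{D}}e^{-2\pi i t\xi\cdot x}\,\xi\cdot n(x)\,d\sigma(x),
\end{equation*}
where $n(x)$ denotes the outer unit normal to $\partial\mathcal{D}$. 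This already supplies the explicit factor $t^{-1}$ in the target estimate.

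Second, I would decompose the surface integral with a smooth partition of unity into a piece $I_1$ supported in a small neighborhood $U$ of the critical point $x(\xi)$ of the phase $-2\pi t\xi\cdot x$ restricted to $\partial\mathcal{D}$, and a complementary piece $I_2$. On the support of $I_2$ the tangential gradient of the phase is bounded away from zero uniformly, so standard non-stationary phase (integration by parts against $\nabla_{\partial\mathcal{D}}(\xi\cdot x)$) yields arbitrarily fast decay in $t$, which is dominated by the cap bound we are aiming for. For $I_1$, the convex finite-type estimate [BNW, Theorem~B] applied to the smooth, bounded amplitude $\xi\cdot n(x)$ yields
\begin{equation*}
|I_1|\lesssim \sigma\bigl(\tilde{B}(x(\xi),t^{-1})\bigr).
\end{equation*}

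Third, I would invoke Lemma~\ref{lemma1} with $t\xi$ in place of $\xi$; since $\xi\in S^{d-1}$ and $|\xi_j|\ge\varepsilon_0$, one has $|(t\xi)_j|/|t\xi|=|\xi_j|\ge\varepsilon_0$ and $x(t\xi)=x(\xi)$, so the lemma bounds $\sigma(\tilde{B}(x(\xi),t^{-1}))$ by exactly the product of minima appearing in the theorem. Assembling the three steps then produces the claim.

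The main obstacle is the localization in the second step: one must check that the implicit constants in [BNW, Theorem~B] and in the non-stationary phase estimate for $I_2$ depend only on $\varepsilon_0$ and $\mathcal{D}$, so that the final bound is uniform on $\{\xi\in S^{d-1}:|\xi_j|\ge\varepsilon_0\}$. This is a standard compactness argument (since $x(\xi)$ varies continuously in $\xi$ and the set of admissible $\xi$ is compact), and once it is carried out the three ingredients combine mechanically to yield the stated estimate.
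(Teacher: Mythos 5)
Your proposal is correct and follows essentially the same route the paper takes: the paper's (one-line) proof of Theorem~\ref{theorem2} likewise cites the Gauss-Green formula, \cite[Theorem~B]{BNW}, and Lemma~\ref{lemma1} as the three ingredients, and your steps simply unpack them in more detail. The only minor point you gloss over is that the phase $\xi\cdot x$ has two critical points $x(\pm\xi)$ on $\partial\mathcal D$, but since $\mathcal D$ is symmetric about the origin (all $\omega_l$ are even) both contribute the same bound, so nothing is lost.
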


\section{Proof of Theorem \ref{latticethm}}

 \begin{proof}
We start with a standard inequality
\begin{equation}
\chi_{(t-\epsilon)\mathcal{D}}*\rho_{\epsilon}\le\chi_{t\mathcal{D}}\le\chi_{(t+\epsilon)\mathcal{D}}*\rho_{\epsilon},\label{hhh}
\end{equation}
where $0\le\rho\in C_{0}^{\infty}(\mathbb{R}^{d})$ satisfies $\int_{\mathbb{R}^{d}}\rho{(x)}\,\mathrm{d}x=1$ and $\rho_{\epsilon}(x)=\epsilon^{-d}\rho(\epsilon^{-1}x)$ with $\epsilon>0$. By summing \eqref{hhh} over $\mathbb{Z}^{d}$ and using the Poisson summation formula we get
\begin{equation}
\sum_{k\in\mathbb{Z}^{d}}\widehat{\chi}_{(t-\epsilon)\mathcal{D}}(k)\widehat{\rho}(\epsilon k)\le\sum_{k\in\mathbb{Z}^{d}}\chi_{t\mathcal{D}}(k)\le\sum_{k\in\mathbb{Z}^{d}}\widehat{\chi}_{(t+\epsilon)\mathcal{D}}(k)\widehat{\rho}(\epsilon k).\label{applyintrodution2}
\end{equation}
Note that
\begin{equation}
\begin{split}
&\sum_{k\in\mathbb{Z}^{d}}\widehat{\chi}_{(t\pm\epsilon)\mathcal{D}}(k)\widehat{\rho}(\epsilon k)\\
&\quad =\vol(\mathcal{D})t^{d}+O(t^{d-1}\epsilon)+(t\pm\epsilon)^{d}\sum_{k\in\mathbb{Z}_{*}^{d}}\widehat{\chi}_\mathcal{D}((t\pm\epsilon)k)\widehat{\rho}(\epsilon k).
\end{split}\label{applyintrodution3}
\end{equation}
Hence we need to estimate $\sum_{k\in\mathbb{Z}_{*}^{d}}\widehat{\chi}_\mathcal{D}(tk)\widehat{\rho}(\epsilon k)$. By using a partition of unity we have
\begin{equation*}
\sum_{k\in\mathbb{Z}_{*}^{d}}\widehat{\chi}_\mathcal{D}(tk)\widehat{\rho}(\epsilon k)=\sum_{j=1}^{d}\sum_{k\in\mathbb{Z}_{*}^{d}}\Omega_j(k)\widehat{\chi}_\mathcal{D}(tk)\widehat{\rho}(\epsilon k)=:\sum_{j=1}^{d}S_j,
\end{equation*}
where $\Omega_j$ is supported in $\Gamma_j=\{x\in\mathbb{R}^{d} : |x_j|/|x|\ge(2d)^{-1/2}\}$ and smooth away from the origin. We then split $S_j$ as follows
\begin{equation*}
S_j=\sum_{i=1}^{d}\sum_{(i)}\Omega_j(k)\widehat{\chi}_\mathcal{D}(tk)\widehat{\rho}(\epsilon k)=:\sum_{i=1}^{d}S_{i,j},
\end{equation*}
where $\sum_{(i)}$ means the summation is over all $k\in\mathbb{Z}_{*}^{d}$ having exactly $i$ nonzero components.

Now we estimate $S_1$. The definition of $\Omega_1$ restricts the domain of summation to a cone about $x_1$-axis such that $|k_1|/|k|\ge(2d)^{-1/2}$. Applying Theorem \ref{theorem2} (with $\varepsilon_0=(2d)^{-1/2}$) yields
\begin{equation}
|S_{1,1}|\lesssim\sum_{k_1\in \mathbb{Z}_{*}^{1}}|tk_1|^{-1}\prod_{l=2}^{d}|tk_1|^{-1/(m_{1,l}\omega_l)}\lesssim t^{-1-\sum_{l=2}^{d}1/(m_{1,l}\omega_l)}.\label{applyineq1}
\end{equation}
For $2\le i\le d$, by applying Theorem \ref{theorem2} and comparing the sums with integrals in polar coordinates we have
\begin{equation}
|S_{i,1}|\lesssim \sum_{S\in P_{i}(\mathbb{N}_{d}),S\ni 1}t^{-\frac{i+1}{2}-\sum_{l=1,l\notin S}^{d}\frac{1}{m_{1,l}\omega_l}}\left(1+\epsilon^{-\frac{i-1}{2}+\sum_{l=1,l\notin S}^{d}\frac{1}{m_{1,l}\omega_l}}\right).\label{applyineq2}
\end{equation}
Note that the first term of the right side above is less than the bound of $|S_{1,1}|$ in \eqref{applyineq1}. We take $\epsilon=t^{-(d-1)/(d+1)}$. Then \eqref{applyineq1} and \eqref{applyineq2} give
\begin{equation*}
|S_1|\lesssim t^{-1-\sum_{l=2}^{d}\frac{1}{m_{1,l}\omega_l}}+\sum_{i=2}^{d}\sum_{S\in P_{i}(\mathbb{N}_{d}),S\ni 1}t^{-1-\frac{i-1}{d+1}-\frac{2d}{d+1}\sum_{l=1,l\notin S}^{d}\frac{1}{m_{1,l}\omega_l}}.
\end{equation*}

The estimations of $S_j$ for $2\leq j\leq d$ are similar. Then we obtain a bound of $\sum_{k\in\mathbb{Z}_{*}^{d}}\widehat{\chi}_\mathcal{D}(tk)\widehat{\rho}(\epsilon k)$. Thus combining \eqref{applyintrodution2}, \eqref{applyintrodution3} and the bound of $\sum_{k\in\mathbb{Z}_{*}^{d}}\widehat{\chi}_\mathcal{D}(tk)\widehat{\rho}(\epsilon k)$ yields the desired upper bound in \eqref{theorem1}, from which we can derive \eqref{theorem11} easily.

It remains to prove the lower bound in \eqref{theorem1} (see also
\cite[P.167-168]{I-S-S-0}). We may assume $j=1$ while other cases are similar.

We first apply the asymptotic expansion in Schulz~\cite{schulz} to get
 \begin{equation*}
 \widehat{n_1d\sigma}(tk)=C_4i\sin(-2\pi tk_1+\pi \nu/2)(tk_1)^{-\nu}+O\left((tk_1)^{-\nu-1/\eta}\right),
 \end{equation*}
 where $n_1$ is the first component of the Gauss map of $\partial \mathcal{D}$, $d\sigma$ is the induced Lebesgue measure on $\partial \mathcal{D}$, $k=(k_1,0,\ldots,0)$ with $k_1\in\mathbb{N}$, $\nu=\sum_{l=2}^{d}1/(m_{1,l}\omega_l)$, $C_4$ is a real number (depending on $\mathcal D$ and the fixed direction of $k$), and $\eta$ is the least common multiple of $m_{1,2}\omega_2,\ldots,m_{1,d}\omega_d$.
Hence by the Gauss-Green formula we have
  \begin{equation}
 \widehat{\chi}_{\mathcal{D}}(tk)=C_5\sin(-2\pi tk_1+\pi \nu/2)(tk_1)^{-1-\nu}+O\left((tk_1)^{-1-\nu-1/\eta}\right).\label{fourierexpansion}
 \end{equation}

Then we split $S_{1,1}$ as follows
\begin{equation*}
S_{1,1}=\sum_{\substack{k=(k_1,0,\ldots,0)\\
k_1\in \mathbb{Z}_{*}^{1}}}\widehat{\chi}_\mathcal{D}(tk)
+\sum_{\substack{k=(k_1,0,\ldots,0)\\k_1\in \mathbb{Z}_{*}^{1}}}\widehat{\chi}_\mathcal{D}(tk)(\widehat{\rho}(\epsilon k)-1).
 \end{equation*}
From \eqref{fourierexpansion}, we have
 \begin{equation}
\sum_{\substack{k=(k_1,0,\ldots,0)\\
k_1\in \mathbb{Z}_{*}^{1}}}\widehat{\chi}_\mathcal{D}(tk)=t^{-1-\nu}g(t)+O(t^{-1-\nu-1/\eta})\label{lowerbound1},
\end{equation}
where
\begin{equation*}
g(t)=\sum_{k_1\in\mathbb{Z}_{*}^{1}}C_5\sin(-2\pi t|k_1|+\pi \nu/2)|k_1|^{-1-\nu}.
\end{equation*}
Here the real function $g(t)$ is periodic and not identically zero, so we have $\limsup_{t\to\infty}|g(t)|>0$. And
\begin{equation}
\sum_{\substack{k=(k_1,0,\ldots,0)\\k_1\in \mathbb{Z}_{*}^{1}}}\widehat{\chi}_\mathcal{D}(tk)(\widehat{\rho}(\epsilon k)-1)=O\left(t^{-1-\nu}(\epsilon^{\nu}+\epsilon)\right).\label{lowerbound2}
\end{equation}
Combining \eqref{lowerbound1}, \eqref{lowerbound2}, \eqref{applyintrodution2} and \eqref{applyintrodution3} yields the desired lower bound. This finishes the proof.

\end{proof}

\end{document}